\documentclass[11pt]{article}
\usepackage[T1]{fontenc}
\usepackage{lmodern}
\usepackage[margin=1in]{geometry}
\usepackage{amsmath,amssymb,amsthm,mathtools}
\usepackage{microtype}
\usepackage[hidelinks]{hyperref}
\hypersetup{pdftitle={Dimensions of permanental varieties in arbitrary size},
  pdfauthor={Ying Xie},pdfcreator={},pdfproducer={}}
\ifdefined\pdfinfoomitdate\pdfinfoomitdate=1\fi
\ifdefined\pdfsuppressptexinfo\pdfsuppressptexinfo=15\fi
\ifdefined\pdftrailerid\pdftrailerid{}\fi
\newtheorem{theorem}{Theorem}[section]

\newtheorem{lemma}[theorem]{Lemma}
\newtheorem{corollary}[theorem]{Corollary}
\theoremstyle{remark}
\newtheorem{remark}[theorem]{Remark}
\DeclareMathOperator{\per}{per}
\DeclareMathOperator{\codim}{codim}
\DeclareMathOperator{\Crit}{Crit}

\DeclareMathOperator{\htop}{in}
\DeclareMathOperator{\Spec}{Spec}
\newcommand{\A}{\mathbb A}
\newcommand{\Gm}{\mathbb G_m}
\newcommand{\kk}{K}

\newcommand{\smallset}[2]{#1\setminus\{#2\}}
\numberwithin{equation}{section}
\title{Dimensions of permanental varieties in arbitrary size}
\author{Ying Xie\\[3pt]
  \small Kennesaw State University\\
  \small\texttt{yxie2@kennesaw.edu}}
\date{}

\begin{document}
\maketitle
\begin{abstract}
Let $I_h(a,b)$ be the ideal generated by the $h\times h$ permanents
of a generic $a\times b$ matrix over a field of characteristic different
from two. We prove that
$\dim \kk[X]/I_h(a,b)=\max(a,b)(h-1)$ whenever
$1\leq h\leq\min(a,b)$ and $h<\max(a,b)$.
Consequently, the critical locus of the permanent of order $n$ has
codimension $2n$ for every $n\geq2$. We also prove that the maximal
permanents of a generic $k\times(k+1)$ matrix generate a geometrically
reduced complete intersection. The dimension argument uses a pivot
expansion and cubic relations in the border variables. Two successive
comparisons with highest-degree equations reduce the relevant fibers
to a coordinate-support calculation. Reducedness follows from a
separate regular-sequence argument and a dimension estimate excluding
components over the exceptional locus.
\end{abstract}

\section{Introduction}

For a matrix $X=(x_{ij})$ of independent variables, the permanent of a
square submatrix is the sum of its transversal monomials, all with
coefficient one. Write $I_h(a,b)\subset\kk[x_{ij}]$ for the ideal of
all $h\times h$ permanents of a generic $a\times b$ matrix, and write
$V_h(a,b)$ for its zero locus over an algebraic closure. All dimensions
are affine dimensions. In particular, codimension is measured in the
full matrix space, rather than in a hypersurface.

Boralevi, Carlini, Micha\l ek, and Ventura studied the codimensions of
maximal permanental varieties and of the singular locus of the
permanental hypersurface \cite{BCMV}. Their Conjecture~3.4 asserts that
the $k+1$ maximal permanents of a generic $k\times(k+1)$ matrix define
a complete intersection in characteristic zero. Their Theorem~3.18
establishes the expected codimension for maximal permanents with
$k\leq5$. Earlier work of Kirkup treats minimal primes and the
$3\times4$ case in detail \cite[Section~7]{Kirkup}.

The following result gives the dimension for every proper subpermanent
size, including maximal permanents of nonsquare matrices.

\begin{theorem}\label{thm:dimension}
Let $\kk$ be a field with $\operatorname{char}\kk\neq2$. If
$1\leq h\leq\min(a,b)$ and $h<\max(a,b)$, then
\begin{equation}\label{eq:main-dimension}
 \dim\kk[X]/I_h(a,b)=\max(a,b)(h-1).
\end{equation}
Equivalently,
\[
 \operatorname{ht} I_h(a,b)
   =\max(a,b)\bigl(\min(a,b)-h+1\bigr).
\]
\end{theorem}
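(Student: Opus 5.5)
By transposing we may assume $a\le b$, so $\max(a,b)=b>h$ and $h\le a$. We prove the two inequalities separately. Since the dimension of an affine variety is unchanged by extending the base field, we may work over an algebraically closed field of characteristic $\neq 2$.

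\emph{Lower bound.} It suffices to exhibit an irreducible subvariety of $V_h(a,b)$ of dimension $b(h-1)$. First take the matrices whose last $a-h+1$ rows vanish. These lie in $V_h(a,b)$: every $h\times h$ submatrix then meets a zero row, because the first $h-1$ rows are too few to supply $h$ distinct rows. This component has dimension $b(h-1)$, which already gives the bound. For $h=1$ the bound reads $\dim\ge 0$, which is trivial since $V_1$ is the origin.

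\emph{Upper bound.} Here the real work lies: we must show $\dim V_h(a,b)\le b(h-1)$. We argue by induction on $h$. The case $h=1$ is clear, since $V_1=\{0\}$. Stratify $V_h(a,b)$ by which entries are nonzero, and consider the open set where some entry is nonzero. After permuting rows and columns, assume this pivot is $x_{11}\neq 0$.

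The pivot expansion writes each $h\times h$ permanent containing row $1$ and column $1$ as
\[
\per = x_{11}\per(M') + (\text{terms involving the border } x_{1j},\,x_{i1}).
\]
Here $M'$ is an $(h-1)$-subpermanent of the complementary $(a-1)\times(b-1)$ block. Unlike determinants, permanents admit no Schur complement. Instead we exploit the cubic relations in the border variables that the abstract mentions. When the row $x_{1j}$ and column $x_{i1}$ are fixed, the $h\times h$ permanents through the pivot, combined with the $h$-permanents that avoid row $1$ or column $1$, impose conditions on the interior block.

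The plan is to degenerate to initial (highest-degree) forms twice. First we compare with the ideal of leading forms in the interior variables, which are essentially $x_{11}\per(M')$. Over the locus $x_{11}\neq0$ this forces the interior to lie in a fiber of dimension at most that of $V_{h-1}(a-1,b-1)$, up to an error controlled by the border. Second, we take the leading forms with respect to the border variables. Using the cubic relations, we show that for a generic border the fibers are cut out by coordinate (monomial-type) conditions. Their dimension then reduces to counting admissible supports: each of the $b$ columns contributes at most $h-1$ free entries.

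Since upper semicontinuity of fiber dimension is preserved under these flat degenerations, we obtain
\[
\dim(V_h\cap\{x_{11}\neq0\}) \le (a+b-1) + \dim(\text{fiber}) \le b(h-1).
\]
The closed complement, where all entries of row $1$ or column $1$ vanish, reduces to $V_h(a-1,b)$ or $V_h(a,b-1)$. These are handled by induction on $a+b$. The border case $b-1=h$ is also permissible, because when the matrix becomes square, $h=a=b$ gives a single hypersurface, of dimension $ab-1\le b(h-1)$ only when\ldots. We must therefore check that the condition $h<\max$ survives, and if it fails, bound the dimension directly.

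I expect the main obstacle to be the fiber estimate. One has to show that the cubic border relations, together with the two successive initial-form comparisons, genuinely reduce the problem to a support count that yields exactly $b(h-1)$, with no excess components arising from special border values. Characteristic $\neq2$ enters at this point, since the cubic relations must not degenerate. Here I would first try to compute the border relations explicitly for $h=2$, where $2\times2$ permanents $x_{11}x_{ij}+x_{1j}x_{i1}$ give interior entries determined by the border whenever $x_{11}\neq0$. From that computation I would extrapolate the pattern of supports.
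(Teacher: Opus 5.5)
Your lower bound is correct. The upper bound, which is the whole content of the theorem, has a genuine gap, as you anticipate. The proposal lists the ingredients but not the mechanism that makes them produce $b(h-1)$.

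\textbf{The cubic identity is missing.} In the chart $X=\begin{pmatrix}Y&u\\ v^{\mathsf T}&1\end{pmatrix}$, the $h$-permanents through the pivot have highest $Y$-degree parts $p_{A,B}$. Every other $h$-permanent is already implied by $Y\in V_{h-1}$. So a first highest-degree comparison applied to the generators alone yields only $V_{h-1}(a-1,b-1)\times\A^{a+b-2}$. Your count $(a+b-1)+(b-1)(h-2)$ then exceeds $b(h-1)$ by exactly $a-h+1$; this excess is what ``up to an error controlled by the border'' conceals. The paper removes it with the combination
\[
\sum_{j\in B}v_jP_{A,B\setminus\{j\}}-R_{A,B}
=2\sum_{i\in A,\ \{j,\ell\}\subset B}u_iv_jv_\ell\,p_{A\setminus\{i\},B\setminus\{j,\ell\}}
\qquad(|A|=h-1,\ |B|=h),
\]
in which the $Y$-degree $h-1$ parts cancel. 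Dividing by $2$ is where $\operatorname{char}\neq2$ enters. This gives ideal elements, together with their transposes, that are homogeneous of $Y$-degree $h-2$, so they survive the first degeneration unchanged.

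\textbf{Your second step runs in the wrong direction.} Bounding the interior over a generic border gives no upper bound. Fibers jump up at special borders: over $u=v=0$ the interior fiber is all of $V_{h-1}(a-1,b-1)$. The paper instead fibers over $Y$.
\begin{itemize}
\item Take any $Y\in V_{h-1}$ with some nonzero $(h-2)$-subpermanent $c=p_{R,T}$, and fix the $2(h-2)$ border coordinates indexed by $R,T$.
\item In the remaining border variables, the cubic element with $A=R\cup\{i\}$, $B=T\cup\{j,\ell\}$ has highest part $c\,u_iv_jv_\ell$. The transposes give $c\,u_iu_{i'}v_j$.
\item A second comparison reduces to the monomial model $u_iv_jv_\ell=0$, $u_iu_{i'}v_j=0$. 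In the square case, with $m=n-1$ and $k=n-h+1$, this has dimension $k$ in $\A^{2k}$.
\end{itemize}
That is the relevant support count, and it holds for every such $Y$, not generically. Your phrase ``each column contributes $h-1$ free entries'' only describes the lower-bound example.

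You also need the exceptional base where all $(h-2)$-subpermanents of $Y$ vanish. There, induction on $V_{h-2}(m,m)$ gives codimension $m(k+1)\geq nk$.

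Finally, the nonzero-entry charts already cover everything except the origin. Your unfinished reduction to $V_h(a-1,b)$ and $V_h(a,b-1)$ is therefore unnecessary. The rectangular case follows from the square one by padding with zero rows.
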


The condition $h<\max(a,b)$ excludes the full permanent of a square
matrix, which defines a hypersurface. Taking $a=b=n$ and $h=n-1$
gives the exact critical-locus dimension.

\begin{samepage}
\begin{corollary}\label{cor:critical-intro}
For $n\geq2$ and $\operatorname{char}\kk\neq2$,
\[
 \codim_{\A^{n^2}}\Crit(\per_n)=2n.
\]
The same equality holds for the singular locus of
$\{\per_n=0\}$, with codimension still measured in $\A^{n^2}$.
\end{corollary}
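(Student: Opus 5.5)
The corollary follows from Theorem~\ref{thm:dimension} with $a=b=n$ and $h=n-1$, together with a comparison of the critical locus and the singular locus with $V_{n-1}(n,n)$.

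\emph{Step 1: identify the critical locus.} Expanding the permanent along row $i$ gives
\[
 \per_n=\sum_j x_{ij}\,\per(X_{\hat\imath\hat\jmath}).
\]
Hence $\partial\per_n/\partial x_{ij}=\per(X_{\hat\imath\hat\jmath})$, the complementary $(n-1)\times(n-1)$ permanent. As $(i,j)$ ranges over all positions, these partial derivatives are exactly the generators of $I_{n-1}(n,n)$. So $\Crit(\per_n)$, the common zero locus of all partials, is set-theoretically $V_{n-1}(n,n)$.

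\emph{Step 2: apply the theorem.} For $n\ge2$ we have $h=n-1\ge1$, $h\le\min(a,b)=n$, and $h=n-1<n=\max(a,b)$, so the hypotheses hold. Dimension is unchanged by passing to the algebraic closure, and the radical does not change dimension. Therefore
\[
 \dim\Crit(\per_n)=n(n-2),
\]
and the codimension in $\A^{n^2}$ is $n^2-n(n-2)=2n$. Equivalently, this is the height formula $n\,(n-(n-1)+1)=2n$.

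\emph{Step 3: the singular locus.} The singular locus of the hypersurface $\{\per_n=0\}$ is the common zero set of $\per_n$ and all its partials. Because $\per_n$ is homogeneous of degree $n$, Euler's relation gives
\[
 n\,\per_n=\sum_{i,j}x_{ij}\,\partial\per_n/\partial x_{ij},
\]
so $\per_n$ vanishes wherever all partials vanish. The coefficient $n$ is not an obstacle: the row expansion already writes $\per_n$ as a combination of the partials in every characteristic, so the inclusion holds regardless of whether $n$ is invertible in $\kk$. Hence the singular locus equals $\Crit(\per_n)$ as a set and has the same codimension $2n$ in $\A^{n^2}$.

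\emph{Where the difficulty lies.} The only care needed is confirming that the characteristic hypothesis covers all of these steps. No step beyond invoking Theorem~\ref{thm:dimension} is substantial, since that theorem carries all the weight.
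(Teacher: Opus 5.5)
Your proposal is correct and matches the paper's proof: you identify $\Crit(\per_n)$ with $V_{n-1}(n,n)$ via the complementary permanents, apply Theorem~\ref{thm:dimension} with $h=n-1$, and use the row expansion to place $\per_n$ in its gradient ideal, so that the singular locus equals the critical locus. The appeal to Euler's relation is unnecessary, but you correctly note that the row expansion alone settles the inclusion even when the characteristic divides $n$.
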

\end{samepage}

The dimension calculation alone does not imply reducedness of the
ideals. For maximal permanents of a matrix with one more column than
rows, a further argument gives the following scheme-theoretic result.

\begin{theorem}\label{thm:radical-intro}
For every $k\geq1$ and every field $\kk$ of characteristic different
from two, $I_k(k,k+1)$ is a geometrically radical complete-intersection
ideal of height $k+1$.
\end{theorem}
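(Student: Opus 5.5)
The plan has three steps. First, Theorem~\ref{thm:dimension} with $a=k$, $b=k+1$, $h=k$ (allowed because $k<k+1$) gives
\[
\dim \kk[X]/I_k(k,k+1)=(k+1)(k-1)=k(k+1)-(k+1).
\]
So the height is $k+1$. The ideal has exactly $k+1$ generators, namely the permanents $P_j$ obtained by deleting column $j$. Since $\kk[X]$ is Cohen--Macaulay, these form a regular sequence in any order, and $I=I_k(k,k+1)$ is a complete intersection; in particular $\kk[X]/I$ is Cohen--Macaulay. Extending scalars to $\bar\kk$ preserves all of this, so it remains to prove that $\bar\kk[X]/I\bar\kk[X]$ is reduced. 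We may therefore assume $\kk$ is algebraically closed of characteristic $\neq2$.

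Second, we use Serre's criterion. A Cohen--Macaulay ring is reduced exactly when it is generically reduced ($R_0$), since $S_1$ holds automatically. Because the ideal is unmixed, every minimal prime has height $k+1$. It suffices to show that at each minimal prime the Jacobian matrix of $P_1,\dots,P_{k+1}$ has rank $k+1$ at a generic point of the component. Equivalently, the locus $E\subset V_k(k,k+1)$ where this rank drops (the exceptional locus) must have dimension $<(k+1)(k-1)$, so that it contains no component.

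Third, we bound $\dim E$. The derivative $\partial P_j/\partial x_{rs}$ is the $(k-1)$-permanent of the submatrix obtained by deleting row $r$ and columns $j,s$ (and is zero for $s=j$). I would stratify $V$ according to whether the $(k-1)\times(k-1)$ subpermanents vanish:

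\begin{itemize}
\item On the stratum $V\cap V_{k-1}(k,k+1)$, Theorem~\ref{thm:dimension} with $h=k-1<k+1$ gives dimension $(k+1)(k-2)$, which is already smaller than $(k+1)(k-1)$.
\item Off that stratum, some $(k-1)$-subpermanent is nonzero. By permuting rows and columns, normalize it to the pivot block in rows $1,\dots,k-1$ and columns $1,\dots,k-1$. On the open set where this pivot permanent is a unit, each generator is expanded along the last row and the remaining columns. This expresses $P_k$ and $P_{k+1}$ as affine-linear in the border entries $x_{k,s}$ with pivot coefficient, and shows the Jacobian columns for the variables $x_{k,k},x_{k,k+1}$ carry an invertible $2\times2$ block. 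That block has diagonal entries equal to the pivot permanent and zero off-diagonal entries, because $\partial P_k/\partial x_{k,k+1}$ and $\partial P_{k+1}/\partial x_{k,k}$ are both the pivot permanent, and $\partial P_k/\partial x_{k,k}=0$ and $\partial P_{k+1}/\partial x_{k,k+1}=0$.
\item The remaining $k-1$ generators $P_1,\dots,P_{k-1}$ must then be handled modulo these two. Using the relations among the border variables (the cubic relations mentioned in the abstract), I would reduce the rank condition for the residual $(k-1)$ equations to a condition on the border row and column. I would then count dimensions fiberwise over the pivot block, showing that the rank-drop locus in each fiber has codimension at least one inside the fiber of $V$.
\end{itemize}

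The main obstacle is this last fiber count. The rank-drop condition is a determinantal condition on a matrix of $(k-1)$-permanents, and it does not obviously cut down dimension. I would attack it by degenerating to the highest-degree (initial) forms in the border variables, as in the proof of Theorem~\ref{thm:dimension}, and reducing to a coordinate-support computation. In that computation, a point whose border support is too small lies in a lower-dimensional stratum, and a point with generic support has full Jacobian rank. This argument needs $\operatorname{char}\kk\neq2$ to avoid cancellations of the form $2x_{ij}x_{kl}$.
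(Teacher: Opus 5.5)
Your first two steps are correct. The height computation, the complete-intersection and Cohen--Macaulay conclusions, the reduction to $\bar\kk$, and the reformulation as ``full Jacobian rank $k+1$ at a general point of every component'' all hold. So do your first two bullets: the stratum where all $(k-1)$-permanents vanish has dimension $(k+1)(k-2)$, and the block $\operatorname{diag}(c,c)$ sits in the columns $x_{k,k},x_{k,k+1}$.

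But together these only show that the Jacobian has rank at least $2$ off a small stratum. The whole content of the theorem sits in your third bullet, and that bullet is a plan rather than an argument. Theorem~\ref{thm:dimension} gives the dimension of $V_k(k,k+1)$ but says nothing about its components. So you would have to bound the dimension of the locus on $V$ cut out by the $(k+1)$-minors of a $(k+1)\times k(k+1)$ matrix of $(k-1)$-permanents. You do not compute highest-degree parts of these minors. You give no reason why a point with ``generic support'' has full rank; even for the obvious component where the last row vanishes, this already requires a nontrivial nonvanishing determinant. Also, the cubic relations of Lemma~\ref{lem:cubic} come from a $1\times1$ pivot with its border row and column, not from a nonzero $(k-1)\times(k-1)$ block. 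It is therefore unclear what reduction of the residual rank condition you intend.

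The missing idea is that the paper never checks Jacobian ranks; it proves reducedness by induction on $k$ through a degeneration.

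\begin{itemize}
\item It normalizes a single nonzero entry to $1$, leaving an interior block $Y$ of size $(k-1)\times k$ and border vectors $u,v$.
\item The identity $2S=\sum_j v_jP_j-R$ replaces the generator $R$ by the border cubic $S$; this uses $\operatorname{char}\kk\neq2$ again.
\item With weight one on $Y$ and zero on $u,v$, the highest parts are $p_1,\ldots,p_k,S$. Here $(p_1,\ldots,p_k)=I_{k-1}(k-1,k)$ for $Y$, which is a radical complete intersection by induction.
\item Wherever some $(k-2)$-permanent $c$ of $Y$ is nonzero, $S$ has highest part $c\,u_iv_jv_\ell$ in three free border variables. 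Hence $S$ is a nonzerodivisor and is squarefree over each base function field (Lemma~\ref{lem:squarefree}).
\item Dimension counts, using Theorem~\ref{thm:dimension} for the $(k-2)$-permanents of $Y$, show that every component of $V(p_1,\ldots,p_k,S)$ dominates a component of the base. That complete intersection is therefore generically reduced, hence radical.
\item Lemma~\ref{lem:filtered} lifts radicality to the chart ideal $(P_1,\ldots,P_k,S)$, and Cohen--Macaulayness handles the origin.
\end{itemize}

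Without this inductive degeneration, or an actual bound on the Jacobian rank-drop locus, your argument stops at rank two.
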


Here geometrically radical means that the ideal remains radical after
every field extension. In particular, Theorem~\ref{thm:radical-intro}
proves the complete-intersection assertion of \cite[Conjecture~3.4]{BCMV}
and also gives reducedness in every characteristic other than two.

The prime-size preprint \cite{XiePrime} proves a Frobenius-splitting
statement by an explicit trace calculation in the relevant prime
characteristic. The present paper gives a separate dimension proof
valid at every size; no recursion from a prime size is used.
Its conclusions about dimension and reducedness do not supply a
Frobenius splitting. The arithmetic distinction is already visible
for the $3\times4$ ideal, whose Frobenius behavior is studied in
\cite{XieThree}.

The proof of Theorem~\ref{thm:dimension} is in
Sections~\ref{sec:pivot}--\ref{sec:dimension}. After normalizing a
nonzero matrix entry, one first takes highest-degree parts in the
interior block. Cubic relations in the border survive this step.
A second highest-degree comparison, made after fixing a smaller
nonzero subpermanent, bounds the border fibers. Section~\ref{sec:radical}
proves reducedness, with the required filtered-algebra lemma supplied
in Section~\ref{sec:filtered}. Section~\ref{sec:consequences} records
consequences and the characteristic-two boundary.

\section{A pivot identity and two elementary lemmas}\label{sec:pivot}

We work over an algebraically closed field until otherwise stated.
For row and column sets $A,B$ of the same size, set
\[
 p_{A,B}=\per Y[A,B],\qquad p_{\varnothing,\varnothing}=1.
\]
Whenever a pivot entry is nonzero, multiplying its row by the inverse
of that entry identifies the chart with $\Gm$ times a chart whose
pivot is one. Row and column permutations allow us to place the pivot
in the lower-right corner. The normalized matrix has the form
\begin{equation}\label{eq:pivot-matrix}
 X=\begin{pmatrix}Y&u\\ v^{\mathsf T}&1\end{pmatrix}.
\end{equation}
These operations preserve the vanishing of all permanents of a fixed
size. For $|A|=|B|$, denote by $P_{A,B}$ the permanent obtained by
adjoining the pivot row and column to $Y[A,B]$. Direct expansion gives
\begin{equation}\label{eq:border}
 P_{A,B}=p_{A,B}
   +\sum_{i\in A,\,j\in B}u_i v_j
                  p_{\smallset A i,\smallset B j}.
\end{equation}
If $|A|=h-1$ and $|B|=h$, let $R_{A,B}$ be the $h\times h$
permanent using rows $A$ and the pivot row, and columns $B$, without
the pivot column.

\begin{lemma}[Cubic border relation]\label{lem:cubic}
For these sets $A,B$,
\begin{equation}\label{eq:cubic-identity}
 \sum_{j\in B}v_jP_{A,\smallset B j}-R_{A,B}
 =2\sum_{\substack{i\in A\\ \{j,\ell\}\subset B}}
       u_i v_jv_\ell\,
       p_{\smallset A i,B\setminus\{j,\ell\}}.
\end{equation}
The sum on the right runs over unordered two-element column sets.
There is a transposed identity with two $u$ variables and one $v$
variable.
\end{lemma}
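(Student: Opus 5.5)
The identity should follow by expanding both terms on the left with the border formula \eqref{eq:border} and a row expansion. The $p$-terms then cancel, and each surviving cubic term appears exactly twice.

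\textbf{Step 1: expand $R_{A,B}$.} The matrix defining $R_{A,B}$ has rows $A$ together with the pivot row, and columns $B$. Its last row consists of the border entries $v_j$, $j\in B$, because the pivot column is omitted. Laplace-type expansion of the permanent along this row, with every sign equal to $+1$, gives
\[
 R_{A,B}=\sum_{j\in B} v_j\,p_{A,\smallset B j}.
\]

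\textbf{Step 2: expand each $P_{A,\smallset B j}$.} Apply \eqref{eq:border} with the row set $A$ and the column set $\smallset B j$, which both have size $h-1$:
\[
 P_{A,\smallset B j}=p_{A,\smallset B j}+\sum_{i\in A,\ \ell\in B\setminus\{j\}} u_i v_\ell\,p_{\smallset A i,\,B\setminus\{j,\ell\}}.
\]
Multiply by $v_j$ and sum over $j\in B$. The constant terms give exactly $\sum_j v_j p_{A,\smallset B j}$, which is $R_{A,B}$ by Step~1, so they cancel against $-R_{A,B}$. What remains is
\[
 \sum_{i\in A}\ \sum_{\substack{j,\ell\in B\\ j\neq\ell}} u_i v_jv_\ell\, p_{\smallset A i,\,B\setminus\{j,\ell\}}.
\]

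\textbf{Step 3: symmetrize.} In this sum the summand is symmetric under swapping $j$ and $\ell$, since both $v_jv_\ell$ and the column set $B\setminus\{j,\ell\}$ are unchanged. Each unordered pair $\{j,\ell\}\subset B$ therefore occurs exactly twice, which yields the factor $2$ in \eqref{eq:cubic-identity}.

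\textbf{Step 4: the transposed identity.} This is obtained by applying the same computation to $X^{\mathsf T}$, which interchanges $u$ and $v$ and rows and columns. Take $|A|=h$ and $|B|=h-1$, and replace $R$ by the permanent with rows $A$ and columns $B$ together with the pivot column, omitting the pivot row. The resulting identity is
\[
 \sum_{i\in A}u_iP_{\smallset A i,B}-R'_{A,B}=2\sum_{\{i,k\}\subset A,\ j\in B}u_iu_kv_j\,p_{A\setminus\{i,k\},\smallset B j}.
\]

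There is no real obstacle here. The only point needing care is the bookkeeping in Step~3: the ordered pairs $(j,\ell)$ with $j\neq\ell$ must match unordered pairs two-to-one. Note also that the factor $2$ is exactly where $\operatorname{char}\kk\neq2$ will later matter.
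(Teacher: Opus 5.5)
Your proof is correct and is essentially the paper's argument. Both expand each $P_{A,\smallset B j}$ by \eqref{eq:border}, identify the leading terms with the row expansion of $R_{A,B}$, and pair the two orderings of each $\{j,\ell\}$ to get the factor $2$, with the transposed identity obtained by applying the computation to $X^{\mathsf T}$.
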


\begin{proof}
Expand every $P$ using \eqref{eq:border}. Its first terms sum to
$\sum_{j\in B}v_jp_{A,\smallset B j}=R_{A,B}$.
In the remaining sum, the two distinct columns $j,\ell$ occur in both
orders. Their two contributions are equal, giving the factor two.
Transposing the matrix gives the second identity.
\end{proof}

\begin{lemma}[Highest-degree comparison]\label{lem:dimension}
Let $f_1,\ldots,f_s\in\kk[y_1,\ldots,y_d,z_1,\ldots,z_e]$.
Let $q_i$ be the nonzero part of $f_i$ of highest total degree in the
$y$ variables, with zero polynomials omitted. Then
\[
 \dim V(f_1,\ldots,f_s)\leq\dim V(q_1,\ldots,q_s).
\]
The variables $z$ have weight zero and need not be specialized.
\end{lemma}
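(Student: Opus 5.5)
The plan is to use the standard Gröbner/initial-ideal degeneration, via a Rees algebra (a flat one-parameter family). Grade the polynomial ring by the weight $w$ that gives each $y_k$ weight $1$ and each $z_l$ weight $0$. Let $J=(f_1,\ldots,f_s)$, and let $\htop_w(J)$ be the ideal generated by the $w$-highest parts of all elements of $J$.

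First I show $\dim V(J)=\dim V(\htop_w J)$. Introduce a new variable $t$ and the homogenization
\[
 \tilde f(y,z,t)=t^{\deg_y f}f(t^{-1}y,z),
\]
whose value at $t=0$ is the highest $y$-part of $f$. Let $\tilde J$ be the ideal generated by the $\tilde f$, $f\in J$. It is standard that $\kk[y,z,t]/\tilde J$ is $t$-torsion-free, hence flat over $\kk[t]$. Its fiber over $t=1$ is $\kk[y,z]/J$, and its fiber over $t=0$ is $\kk[y,z]/\htop_w J$. The general fibers are isomorphic to the $t=1$ fiber via the torus action $y\mapsto \lambda y$. For finitely generated algebras, flatness over $\kk[t]$ together with this $\Gm$-equivariance gives equal fiber dimensions. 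More concretely, I take the closure in $\A^{d+e}\times\A^1$ of the orbit family $\{(\lambda^{-1}\cdot p,\lambda)\}$. The special fiber of this closure has pure dimension $\dim V(J)$ on each component that meets $t=0$, and by torsion-freeness every component dominates $\A^1$. So $\dim V(\htop_w J)=\dim V(J)$; alternatively one can use Hilbert functions with respect to a refined monomial order.

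Next I compare ideals. Each $q_i$ is the highest part of $f_i\in J$, so $(q_1,\ldots,q_s)\subseteq \htop_w J$. Hence $V(\htop_w J)\subseteq V(q_1,\ldots,q_s)$, and
\[
 \dim V(f)=\dim V(\htop_w J)\le\dim V(q).
\]
When $f_i$ has no $y$-variables at all, its highest part is $f_i$ itself, which is nonzero unless $f_i=0$; this is why zero polynomials are omitted. The $z$ variables are never specialized, since they simply have weight zero throughout.

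The main obstacle is the first step, the equality of dimensions under a degeneration with a weight that is only nonnegative (and zero on the $z$). The usual Gröbner argument needs a positive grading or a term order. I would handle it with a fallback: refine $w$ by a monomial order to get a genuine term order $>$, and use $\htop_>(J)=\htop_>(\htop_w J)$. Then the classical fact $\dim \kk[x]/J=\dim\kk[x]/\htop_> J$ (Hilbert polynomial after homogenization) applied twice gives the equality. It actually suffices to prove only the inequality $\dim V(J)\le\dim V(\htop_w J)$, which follows from upper semicontinuity of fiber dimension at $t=0$ on the closed family.
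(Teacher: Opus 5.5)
Your proposal is correct in substance, but it takes a heavier route than the paper, and the geometric version leaves out the one point everything rests on. The paper never forms $\htop_w J$ and never uses flatness. For each component $Z$ of $V(f_1,\ldots,f_s)$ it takes the closure of $\{(ty,z,t):(y,z)\in Z,\ t\neq0\}$ and proceeds in three steps:
the homogenized generators $t^{d_i}f_i(y/t,z)$ vanish on this closure and restrict to $q_i$ at $t=0$;
the fiber over $t=0$ is nonempty because $(ty,z,t)\to(0,z,0)$;
Krull's principal ideal theorem then bounds that fiber's dimension below by $\dim Z$.
You instead prove the stronger equality $\dim V(J)=\dim V(\htop_w J)$ and then use $(q_1,\ldots,q_s)\subseteq\htop_w J$. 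This shows where the inequality can be strict, but it needs the Rees-algebra setup or, in your fallback, Gr\"obner-theoretic facts. The inequality needs neither.

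The gap is in the direction you actually need, $\dim V(J)\le\dim V(\htop_w J)$. It holds only because a top-dimensional component of the family reaches $t=0$. Neither torsion-freeness nor upper semicontinuity of fiber dimension supplies this, since the family is not proper: $V(ty-1)\to\A^1_t$ is flat with empty special fiber. Your orbit family $(\lambda^{-1}\cdot p,\lambda)$ also has the exponent reversed. With your homogenization the fiber over $t=\lambda$ is $\lambda\cdot V(J)$. The curves $\lambda\mapsto(\lambda^{-1}y,z,\lambda)$ are exactly of the hyperbola type that escape as $\lambda\to0$. The missing sentence is the paper's: $(\lambda y,z,\lambda)\to(0,z,0)$, so the closure of $\{(\lambda y,z,\lambda):(y,z)\in Z,\ \lambda\neq0\}$ meets $t=0$. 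Krull then gives every component of that special fiber dimension $\dim Z$; this is not $\dim V(J)$ in general, since $V(J)$ need not be equidimensional.

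Your term-order fallback is a valid alternative. The refined order $>$ is a genuine term order because $w\geq0$, and $\htop_>(J)=\htop_>(\htop_w J)$ holds. However, $>$ is not degree-compatible, since every $y$ variable exceeds every power of $z$. So ``Hilbert polynomial after homogenization'' does not apply directly. You need invariance of dimension under initial ideals for arbitrary term orders. That fact is usually proved by realizing $\htop_>(J)$ as $\htop_\omega(J)$ for a strictly positive weight $\omega$ and running the same contraction-to-$t=0$ argument.
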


\begin{proof}
Let $Z$ be an irreducible component of $V(f_1,\ldots,f_s)$ of
dimension $d_Z$. In $\A^{d+e}\times\A^1$, take the closure $\mathcal Z$
of
\[
 \{(t y,z,t):(y,z)\in Z,\ t\neq0\}.
\]
It is irreducible of dimension $d_Z+1$. Its fiber at $t=0$ is nonempty:
scaling any fixed point of $Z$ gives a curve with a limit at $t=0$.
The coordinate $t$ is neither zero nor a unit on $\mathcal Z$.
The principal ideal theorem and the dimension formula for affine
domains over a field therefore give
$\dim(\mathcal Z\cap\{t=0\})\geq d_Z$.
If $d_i=\deg_y f_i$, the polynomial
$t^{d_i}f_i(y/t,z)$ vanishes on $\mathcal Z$ and restricts to $q_i$
at $t=0$. Thus that fiber lies in $V(q_1,\ldots,q_s)$.
Taking the maximum over $Z$ proves the assertion. This is the usual
weighted homogenization proof of the filtered dimension comparison.
\end{proof}

\begin{lemma}[The border support model]\label{lem:support}
For $k\geq2$, the zero locus in $\A^{2k}$ of
\begin{equation}\label{eq:support}
 u_i v_jv_\ell=0\quad(j\neq\ell),\qquad
 u_i u_{i'}v_j=0\quad(i\neq i')
\end{equation}
has dimension $k$.
\end{lemma}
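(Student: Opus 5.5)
The zero locus is a union of coordinate subspaces, so I will describe it combinatorially by supports and then maximize the dimension over the admissible supports. For a point $(u,v)\in\A^{2k}$ let $S=\{i:u_i\neq0\}$ and $T=\{j:v_j\neq0\}$. The point satisfies \eqref{eq:support} exactly when no monomial $u_iv_jv_\ell$ with $i\in S$ and distinct $j,\ell\in T$, and no monomial $u_iu_{i'}v_j$ with distinct $i,i'\in S$ and $j\in T$, survives. This happens exactly when the pair $(S,T)$ satisfies one of the following:
\begin{itemize}
\item[(a)] $S=\varnothing$;
\item[(b)] $T=\varnothing$;
\item[(c)] $|S|=|T|=1$.
\end{itemize}
Indeed, if $S$ and $T$ are both nonempty, then $|T|\geq2$ kills the first type of monomial and $|S|\geq2$ kills the second.

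Hence the zero locus is the union of three kinds of coordinate subspaces. The first is $\{u=0\}$, of dimension $k$. The second is $\{v=0\}$, also of dimension $k$. The third is, for each pair $(i,j)$, the plane where all coordinates other than $u_i$ and $v_j$ vanish, of dimension $2$. Since $k\geq2$, the maximum of these dimensions is $k$. The locus is nonempty, and $\{u=0\}$ is a component of dimension exactly $k$, so the dimension is $k$ and not merely at most $k$.

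There is no real obstacle here. The only points needing care are the following.
\begin{itemize}
\item The indexing: the $u$ and $v$ variables are each indexed by $\{1,\dots,k\}$, as in the ambient space $\A^{2k}$. The relations range over all $i$ and all $j\neq\ell$ (respectively all $i\neq i'$ and all $j$), and nothing further is imposed when $i$ and $j$ happen to share an index.
\item The use of $k\geq2$: it guarantees that the two-dimensional planes in case (c) do not exceed $k$.
\item The zero locus is taken over the algebraically closed field, so describing it by supports of points is legitimate.
\end{itemize}
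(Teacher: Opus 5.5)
Your proof is correct and follows essentially the same route as the paper: you split into $u=0$, $v=0$, or both nonzero, in which case the two monomial families force $u$ and $v$ each to have exactly one nonzero coordinate. This yields a finite union of coordinate spaces of dimensions $k$ and $2$. Your support description is an exact equality where the paper only states a containment, but nothing changes, since in both versions the lower bound comes from the $k$-dimensional space $\{u=0\}$.
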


\begin{proof}
If either $u=0$ or $v=0$, the other vector is arbitrary, giving two
$k$-dimensional coordinate spaces. If both vectors are nonzero, the
first family forces $v$ to have at most one nonzero coordinate, and
the second forces the same for $u$. This part is contained in the
union of the coordinate planes on one $u$ coordinate and one $v$
coordinate. The whole locus is therefore a finite union of spaces
of dimensions $k$ and two, with maximum $k$.
\end{proof}

\section{The dimension theorem}\label{sec:dimension}

\begin{proof}[Proof of Theorem~\ref{thm:dimension} for square matrices]
We prove simultaneously for all $1\leq h<n$ that
\begin{equation}\label{eq:square}
 \dim V_h(n,n)=n(h-1).
\end{equation}
The case $h=1$ is immediate, and starts the induction on $n$.
Suppose $2\leq h<n$, set $m=n-1$, and use the normalized pivot chart
\eqref{eq:pivot-matrix}, with $Y$ of size $m\times m$.
Put
\[
 r=h-2,\qquad k=m-r=n-h+1\geq2.
\]
The normalized ambient space has dimension $m^2+2m$.

Give entries of $Y$ weight one and all border variables weight zero.
For $|A|=|B|=h-1$, the highest $Y$-degree part of $P_{A,B}$ is
$p_{A,B}$. Since two is invertible, Lemma~\ref{lem:cubic} puts the
following polynomials in the normalized $h$-permanental ideal:
\begin{equation}\label{eq:C}
 C_{A,B}=
 \sum_{\substack{i\in A\\\{j,\ell\}\subset B}}
 u_i v_jv_\ell\,p_{\smallset A i,B\setminus\{j,\ell\}},
 \qquad |A|=h-1,\quad |B|=h,
\end{equation}
together with their transposes. They are homogeneous of $Y$-degree
$h-2$. Lemma~\ref{lem:dimension}, applied to these polynomials and
the bordered permanents, bounds the dimension of the normalized
chart by the dimension of the locus $W$ defined by
\begin{equation}\label{eq:W}
 Y\in V_{h-1}(m,m),\qquad C_{A,B}=0,
 \qquad C^{\mathsf T}_{A,B}=0.
\end{equation}
Only inclusion in the full ideal of highest parts is needed here;
we do not assert that the displayed equations generate that ideal.

First consider the open part of the base where some $r\times r$
permanent $c=p_{R,T}$ is nonzero. When $r=0$, use the empty permanent
$c=1$, and this is the entire base. Fix $Y$ in this open set, and
fix the $2r$ border coordinates $u_i$ for $i\in R$ and $v_j$ for
$j\in T$. There remain $k$ coordinates of each border vector.
For $i\notin R$ and distinct $j,\ell\notin T$, choose
$A=R\cup\{i\}$ and $B=T\cup\{j,\ell\}$ in \eqref{eq:C}.
Its highest degree in the remaining border coordinates is exactly
\begin{equation}\label{eq:second-top}
 c\,u_i v_jv_\ell.
\end{equation}
Indeed, any other summand uses at least one of the fixed border
coordinates and has degree at most two in the remaining ones.
The transposed equations similarly have highest parts
$c\,u_i u_{i'}v_j$.

A second application of Lemma~\ref{lem:dimension}, now in the
remaining border variables, and Lemma~\ref{lem:support} bound every
such fiber by $k$. Letting the fixed $2r$ coordinates vary gives
\begin{equation}\label{eq:fiber-bound}
 \dim W_Y\leq2r+k=2m-k.
\end{equation}
This is a bound for every $Y$ with $c\neq0$, not just a generic
fiber. The finitely many choices of $R,T$ cover the open part under
consideration. By induction the base $V_{h-1}(m,m)$ has codimension
$mk$ in $\A^{m^2}$. Consequently this part of $W$ has codimension
at least
\[
 mk+k=nk
\]
in the normalized ambient space.

If $h\geq3$, the remaining base is $V_{h-2}(m,m)$.
By induction its codimension is $m(k+1)$. Even allowing all $2m$
border coordinates, the part of $W$ above it has codimension at least
$m(k+1)$. This suffices because
\begin{equation}\label{eq:exceptional-dimension}
 m(k+1)-nk=m-k=h-2\geq0.
\end{equation}
For $h=2$ there is no remaining base. We have proved that the
normalized chart has dimension at most $m^2+2m-nk$.
Restoring the independent nonzero pivot coordinate adds one dimension,
so every nonzero-entry chart of $V_h(n,n)$ has dimension at most
$n^2-nk=n(h-1)$. These charts cover all points except the origin.

For the reverse inequality, matrices supported on any chosen $h-1$
rows form an $n(h-1)$-dimensional linear space in $V_h(n,n)$.
This proves \eqref{eq:square} and completes the induction.
\end{proof}

\begin{proof}[Rectangular matrices and arbitrary ground fields]
By transposition suppose $a\leq b$. If $h<b$, adjoining $b-a$ zero
rows embeds $V_h(a,b)$ as a closed subvariety of $V_h(b,b)$.
Every $h$-permanent involving an adjoined row vanishes, and the
others are exactly the original equations. The square case gives
$\dim V_h(a,b)\leq b(h-1)$. Matrices supported on $h-1$ rows give
the opposite inequality.

For a field that is not algebraically closed, Krull dimension of a
finitely generated algebra is unchanged by extension to an algebraic
closure. The equality and its height formulation therefore hold over
every field of characteristic different from two.
\end{proof}

\section{Lifting a radical ideal from highest parts}\label{sec:filtered}

The dimension comparison used above requires no equality of initial
ideals. Reducedness does require that equality. We record a sufficient
condition, including the case of variables of weight zero.

\begin{lemma}\label{lem:filtered}
Let $R$ be a polynomial ring over a field, equipped with nonnegative
integer weights on its variables. For $f_1,\ldots,f_s\in R$, let
$q_i=\htop(f_i)$ be their highest-weight parts. If
$q_1,\ldots,q_s$ form a regular sequence, then
\begin{equation}\label{eq:initial}
 \htop\bigl((f_1,\ldots,f_s)\bigr)=(q_1,\ldots,q_s).
\end{equation}
If $(q_1,\ldots,q_s)$ is also radical, then so is
$(f_1,\ldots,f_s)$.
\end{lemma}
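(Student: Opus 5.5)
The plan has two parts: first show the initial ideal is generated by the $q_i$ by lifting syzygies, then pass radicality from the graded ideal to the filtered one.

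\textbf{Notation.} Write $I=(f_1,\ldots,f_s)$ and $Q=(q_1,\ldots,q_s)$. Put $d_i=\deg_w f_i$, the weighted degree of $f_i$. Weight-zero variables are harmless here: $R$ is graded by the weight with degree-zero part a polynomial ring $R_0$, and $\htop$ takes the top graded component. The inclusion $Q\subseteq\htop(I)$ is trivial.

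\textbf{The equality \eqref{eq:initial}.} For the reverse inclusion, take $g\in I$ and choose a representation $g=\sum a_if_i$ that minimizes $D=\max_i(\deg a_i+d_i)$.
\begin{itemize}
\item If $D=\deg g$, then $\htop(g)=\sum' \htop(a_i)q_i$, summed over the indices attaining $D$, so $\htop(g)\in Q$.
\item Otherwise the top parts cancel: $\sum'\htop(a_i)q_i=0$. This is a homogeneous syzygy of the $q_i$.
\end{itemize}
Because the $q_i$ are a regular sequence of homogeneous elements, the Koszul complex is exact. Hence every homogeneous syzygy is a combination of Koszul syzygies $q_je_i-q_ie_j$ with homogeneous coefficients. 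Replace each Koszul relation by the corresponding $f_je_i-f_ie_j$, whose degree is at most the total. Subtracting this lift rewrites $g$ with strictly smaller $D$, contradicting minimality.

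Termination needs care, since weight-zero variables make the graded pieces infinite-dimensional over $K$. It still holds because $D$ is a nonnegative integer bounded below by $\deg g$. The main obstacle is exactly here: regularity must be used in the graded-with-weight-zero setting. Two points need checking.
\begin{itemize}
\item When $R_0\neq K$, exactness of the Koszul complex for a regular sequence still holds; it holds in any ring.
\item If one prefers to reduce to a homogeneous regular sequence in any order, note that the $q_i$ lie in positive degree only if $d_i>0$. I would allow $d_i=0$ as well; the argument above does not use positivity.
\end{itemize}

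\textbf{Radicality.} Suppose $Q$ is radical and $g^N\in I$ for some $N$. I want $g\in I$, and I induct on $\deg g$ modulo $I$: choose $g$ of minimal weighted degree in its coset $g+I$.
\begin{itemize}
\item Since $R$ is a domain, $\htop(g^N)=\htop(g)^N$. This lies in $\htop(I)=Q$, so $\htop(g)\in Q$ because $Q$ is radical.
\item Then $\htop(g)=\sum b_iq_i$ with $b_i$ homogeneous of degree $\deg g-d_i$. Set $g'=g-\sum b_if_i$; it has smaller degree, satisfies $g'\equiv g \pmod I$, and $g'^N\in I$.
\end{itemize}
This contradicts the choice of $g$ unless $g\in I$ (say $g=0$ representative). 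Descending induction on a nonnegative integer terminates. If $\deg g=0$, then $g\in R_0$, and $\htop(g)=g\in Q$ gives $g\in I$ directly, since each relevant $f_i$ with $d_i=0$ satisfies $f_i=q_i$. Alternatively, the Rees-algebra picture gives the same conclusion: $R/I$ has associated graded $R/Q$, and reducedness of the associated graded ring forces reducedness of $R/I$.
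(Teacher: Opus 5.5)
Your proposal is correct and takes essentially the same approach as the paper. You lift the Koszul syzygies of the $q_i$ to $f_je_i-f_ie_j$ to lower $D=\max_i(\deg a_i+d_i)$ until the top parts can no longer cancel, then get radicality by subtracting $\sum b_if_i$ to lower the weight of a nilpotent representative, phrasing each descent as a minimal-counterexample argument where the paper iterates until termination.
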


\begin{proof}
Set $d_i=\deg f_i$ for the weighted degree. For a representation
$F=\sum_i a_if_i$, let $D=\max_i(\deg a_i+d_i)$.
If $D>\deg F$, the weight-$D$ terms give a homogeneous syzygy
\[
 \sum_i a_i^{[D-d_i]}q_i=0,
\]
where the bracket denotes a homogeneous component. Since the $q_i$
form a regular sequence, their first syzygies are generated by the
Koszul syzygies; see \cite[Tag 062D]{Stacks}. Thus there are
homogeneous $b_{ij}=-b_{ji}$ of weight $D-d_i-d_j$, with $b_{ii}=0$,
such that
$a_i^{[D-d_i]}=\sum_jb_{ij}q_j$.
Replace $a_i$ by $a_i-\sum_jb_{ij}f_j$.
The represented polynomial $F$ is unchanged, because the sum
$\sum_{i,j}b_{ij}f_jf_i$ is zero, and the new maximum weight is
strictly less than $D$. Descending through nonnegative integers
eventually gives a representation with $D\leq\deg F$.
Its highest part lies in $(q_i)$. This proves the nontrivial inclusion
in \eqref{eq:initial}; the reverse inclusion is immediate.

Now suppose the latter ideal is radical and $g^N\in(f_i)$ for some
$N\geq1$. The equality of initial ideals gives
$\htop(g)^N\in(q_i)$, and hence $\htop(g)\in(q_i)$.
Choose a homogeneous expression for $\htop(g)$ in the $q_i$ and
replace $q_i$ by $f_i$. This produces $F\in(f_i)$ with
$\htop(F)=\htop(g)$. The polynomial $g-F$ has smaller weight and
represents the same nilpotent residue class. Repetition terminates;
at weight zero the subtraction leaves zero. Hence $g\in(f_i)$.
\end{proof}

\begin{lemma}\label{lem:squarefree}
Let $L$ be a field, and let $S\in L[z_1,\ldots,z_e,x,y,w]$.
If its highest total degree part in $x,y,w$ is $cxyw$ for some
$c\in L^\times$, then $S$ is squarefree.
\end{lemma}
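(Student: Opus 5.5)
The plan is to grade $L[z_1,\ldots,z_e,x,y,w]$ by total degree in $x,y,w$, with the $z$ variables of weight zero. I will show directly that $S$ has no factor $g^2$ with $g$ a nonunit. The argument rests on one fact: taking highest-degree parts is multiplicative.

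The first step records that fact. View the ring as $B[x,y,w]$ with $B=L[z_1,\ldots,z_e]$, an integral domain. For nonzero $f,g$, the product of their highest-degree parts $\htop(f)\htop(g)$ is nonzero, so it is exactly the highest-degree part of $fg$:
\[
 \htop(fg)=\htop(f)\,\htop(g).
\]
Also note that $S\neq0$, since $\htop(S)=c\,xyw\neq0$.

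The second step is the factorization argument. Suppose $S=g^2h$ with $g,h$ nonzero. Multiplicativity gives
\[
 c\,xyw=\htop(g)^2\,\htop(h)
\]
in the UFD $L[z,x,y,w]$. The left side is a unit $c\in L^\times$ times the product of the three pairwise non-associate primes $x,y,w$. Hence any square dividing it is a square of a unit, and so $\htop(g)$ is a unit of $L[z,x,y,w]$, that is, a nonzero constant in $L$.

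The third step transfers this back to $g$. Since $\htop(g)$ has degree $0$ in $x,y,w$, the polynomial $g$ itself has degree $0$ in $x,y,w$, so $g\in B$. But then $g$ is its own highest-degree part, so $g=\htop(g)\in L^\times$ is a unit. Thus $S$ has no repeated nonunit factor, which is squarefreeness. In the irreducible-factor formulation, every irreducible factor appears to exponent one.

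The main point needing care is the second step: one must not lose track of possible factors lying purely in the weight-zero variables $z$. The argument handles this by applying unique factorization in the whole ring $L[z,x,y,w]$, not only in $x,y,w$. The factorization $c\,xyw$ contains no nonconstant $z$-factor, because $c\in L^\times$ rather than merely $c\in B$; this is exactly where the hypothesis $c\in L^\times$ is used.
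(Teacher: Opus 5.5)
Your proof is correct and follows the paper's argument: by multiplicativity of highest parts, a repeated factor $g$ gives $\htop(g)^2 \mid c\,xyw$ in $L[z,x,y,w]$, which forces $\htop(g)\in L^\times$. Your observation that a $g$ of degree zero in $x,y,w$ equals its own highest part plays the role of the paper's remark that a pure $z$-factor would have to divide the constant $c$, and you are more explicit than the paper about why highest parts multiply (the coefficient ring $L[z]$ is a domain).
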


\begin{proof}
If a nonconstant irreducible polynomial $H$ occurred at least twice
in $S$, taking highest parts in $x,y,w$ would show that
$\htop(H)^2$ divides $cxyw$ in the polynomial ring over $L$.
Its only square divisors are units. This also excludes a factor
depending only on the $z$ variables, since such a factor would
divide the constant coefficient $c$. Thus $H$ cannot exist.
\end{proof}

\section{Reduced complete intersections}\label{sec:radical}

\begin{proof}[Proof of Theorem~\ref{thm:radical-intro}]
We first work over an algebraically closed field. The case $k=1$
is the ideal of the two entries of a $1\times2$ matrix. Suppose
$k\geq2$ and use induction on $k$.
Theorem~\ref{thm:dimension} gives height $k+1$ for $I_k(k,k+1)$.
Since the ideal has $k+1$ generators in a polynomial ring, these
generators form a regular sequence. Its quotient is a complete
intersection, Cohen--Macaulay, and pure of dimension
\begin{equation}\label{eq:global-CI-dimension}
 k(k+1)-(k+1)=k^2-1.
\end{equation}
We use the standard regular-sequence and unmixedness properties of
Cohen--Macaulay rings; see \cite[Tag 00N7]{Stacks}.

On a normalized nonzero-entry chart, write
\[
 X=\begin{pmatrix}Y&u\\ v^{\mathsf T}&1\end{pmatrix},
 \quad Y\text{ of size }(k-1)\times k,
 \quad u\in\A^{k-1},\quad v\in\A^k.
\]
For $j\in\{1,\ldots,k\}$ let $p_j$ be the permanent of $Y$ with
column $j$ omitted, and let $P_j$ be the maximal permanent of $X$
with that same column omitted. Let $R$ be the maximal permanent
omitting the pivot column. Set
\begin{equation}\label{eq:S}
 S=\sum_{i=1}^{k-1}\ \sum_{1\leq j<\ell\leq k}
 u_i v_jv_\ell\,
 \per Y[\{1,\ldots,k-1\}\setminus\{i\},
        \{1,\ldots,k\}\setminus\{j,\ell\}].
\end{equation}
Lemma~\ref{lem:cubic} gives
\begin{equation}\label{eq:S-syzygy}
 2S=\sum_{j=1}^k v_jP_j-R.
\end{equation}
Thus the normalized chart ideal is $(P_1,\ldots,P_k,S)$.
For the weights $\deg Y=1$, $\deg u=\deg v=0$, these generators
have highest parts $p_1,\ldots,p_k,S$, respectively.

By induction, the base ring
\[
 A=\kk[Y]/(p_1,\ldots,p_k)
\]
is reduced and a complete intersection, pure of dimension
$d=k(k-2)$. Write $B=\Spec A$.
For $k\geq3$, let $E\subset B$ be the locus where all
$(k-2)\times(k-2)$ permanents of $Y$ vanish. It is a subvariety of
$B$ by expansion of the $(k-1)$-permanents. Theorem~\ref{thm:dimension}
gives
\begin{equation}\label{eq:E-dimension}
 \dim E=k(k-3)=d-k.
\end{equation}
For $k=2$ take $E=\varnothing$ and use the empty permanent as a
nonzero core. In every case $E$ contains no component of $B$.

At a point of $B\setminus E$, choose a nonzero $(k-2)$-permanent
$c$ of $Y$, with row set $R_0$ and column set $T_0$.
There is a unique row $i$ outside $R_0$ and exactly two columns
$j,\ell$ outside $T_0$. Regarding the other border coordinates
as variables of weight zero, the highest part of $S$ in
$u_i,v_j,v_\ell$ is exactly
\begin{equation}\label{eq:radical-top}
 c\,u_i v_jv_\ell.
\end{equation}
This follows term by term from \eqref{eq:S}, just as in
\eqref{eq:second-top}.

Consequently $S$ is nonzero over the function field of every
irreducible component of $B$. In the reduced ring $A[u,v]$, an
element avoiding all minimal primes is a nonzerodivisor. Hence
\[
 J=(p_1,\ldots,p_k,S)\subset\kk[Y,u,v]
\]
is a complete-intersection ideal, and its quotient is pure of
dimension
\begin{equation}\label{eq:J-dimension}
 d+(2k-1)-1=k^2-2.
\end{equation}
By Lemma~\ref{lem:squarefree} and \eqref{eq:radical-top}, the
polynomial $S$ is squarefree over each of the base function fields.
Thus the generic fiber above every component of $B$ is reduced.

We must also exclude components that do not dominate a component of
$B$. Above every point of $B\setminus E$, $S$ is a nonzero
homogeneous cubic in $2k-1$ border variables. Its zero locus has
dimension exactly $2k-2$. Therefore the inverse image of a proper
closed subset of $B$ that contains no component, away from $E$,
has dimension at most
\[
 (d-1)+(2k-2)=k^2-3.
\]
Above $E$, even allowing the entire border space gives, for $k\geq3$,
\begin{equation}\label{eq:no-vertical}
 \dim E+2k-1=k^2-k-1<k^2-2.
\end{equation}
These estimates and purity \eqref{eq:J-dimension} show that every
irreducible component of $V(J)$ dominates a component of $B$.
It follows from the reduced generic fibers that $\kk[Y,u,v]/J$
is generically reduced. A Cohen--Macaulay ring has no embedded
associated primes, so generic reducedness implies reducedness;
equivalently, use the $R_0$ and $S_1$ criterion
\cite[Tag 031R]{Stacks}. Thus $J$ is radical.

Apply Lemma~\ref{lem:filtered} to
$(P_1,\ldots,P_k,S)$ and $(p_1,\ldots,p_k,S)$.
Every normalized nonzero-entry chart is reduced. Adjoining the
invertible pivot coordinate preserves reducedness, so the original
scheme is reduced away from the origin. Its complete-intersection
ring has no embedded primes, and every minimal component has positive
dimension $k^2-1$, so every such component meets a nonzero-entry
chart. It is therefore generically reduced, hence reduced everywhere.

Finally, the argument applies over an algebraic closure of every
field extension of the original ground field. Faithful flatness of
field extensions descends reducedness, while the height and
regular-sequence statements have already been proved over arbitrary
fields. This proves geometric reducedness and completes the induction.
\end{proof}

\section{Consequences and scope}\label{sec:consequences}

\begin{proof}[Proof of Corollary~\ref{cor:critical-intro}]
The partial derivative of $\per_n$ with respect to $x_{ij}$ is the
permanent of the complementary $(n-1)\times(n-1)$ matrix.
These complementary matrices comprise all submatrices of that size.
The critical locus is therefore $V_{n-1}(n,n)$, whose dimension is
$n(n-2)$ by Theorem~\ref{thm:dimension}.
For each fixed row $i$, expansion along that row gives
\[
 \per_n=\sum_{j=1}^n x_{ij}\frac{\partial\per_n}{\partial x_{ij}}.
\]
Thus the permanent belongs to its gradient ideal, and the critical
locus is also the singular locus of its zero hypersurface. This
argument does not divide by $n$ and applies when the characteristic
divides the matrix order.
\end{proof}

\begin{corollary}\label{cor:hilbert}
With the standard grading, the quotient by $I_k(k,k+1)$ has Hilbert
series
\[
 H(t)=\frac{(1-t^k)^{k+1}}{(1-t)^{k(k+1)}}.
\]
Its multiplicity is $k^{k+1}$, and its affine scheme is reduced
and pure of dimension $k^2-1$.
\end{corollary}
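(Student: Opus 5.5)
The plan is to read everything off Theorem~\ref{thm:radical-intro}, which gives that the $k+1$ maximal permanents of the generic $k\times(k+1)$ matrix form a regular sequence in $\kk[X]$ and generate a geometrically radical ideal. Each generator is homogeneous of degree $k$ in the standard grading, since a $k\times k$ permanent is a sum of degree-$k$ monomials. The polynomial ring $\kk[X]$ has $k(k+1)$ variables, so its Hilbert series is $(1-t)^{-k(k+1)}$.

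The key step is the standard fact that if $g$ is a homogeneous nonzerodivisor of degree $e$ on a graded module $M$, then $H_{M/gM}(t)=(1-t^e)H_M(t)$. This comes from the exact sequence $0\to M(-e)\xrightarrow{g}M\to M/gM\to0$. Applying it successively to the $k+1$ generators, each a nonzerodivisor modulo the previous ones because they form a regular sequence of homogeneous elements, gives
\[
 H(t)=\frac{(1-t^k)^{k+1}}{(1-t)^{k(k+1)}}.
\]
One point needs a remark. Theorem~\ref{thm:radical-intro} is phrased via height, and in the graded setting height $k+1$ with $k+1$ homogeneous generators yields a regular sequence in any order. This holds because $\kk[X]$ is Cohen--Macaulay and graded, so we may work locally at the irrelevant ideal.

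For the multiplicity, rewrite $(1-t^k)=(1-t)(1+t+\cdots+t^{k-1})$. This gives
\[
 H(t)=\frac{(1+t+\cdots+t^{k-1})^{k+1}}{(1-t)^{k(k+1)-(k+1)}}=\frac{Q(t)}{(1-t)^{k^2-1}}.
\]
The order of the pole at $t=1$ is $k^2-1$, the Krull dimension, and the multiplicity is $Q(1)=k^{k+1}$.

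Reducedness is the radical statement of Theorem~\ref{thm:radical-intro}. Purity of dimension $k^2-1$ follows because a complete-intersection quotient is Cohen--Macaulay and hence unmixed. This was already recorded as \eqref{eq:global-CI-dimension} in the proof of that theorem. No step is a real obstacle; the only care needed is to justify the regular-sequence property in the graded sense, and that is immediate from homogeneity.
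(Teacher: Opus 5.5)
Your proposal is correct and follows the paper's proof. The paper likewise uses that the $k+1$ homogeneous degree-$k$ permanents form a regular sequence, quotients successively to get the Hilbert series and the multiplicity $k^{k+1}$, and takes reducedness and purity from Theorem~\ref{thm:radical-intro} and Cohen--Macaulay unmixedness. Your remarks on why homogeneity turns height $k+1$ into a regular sequence, and on the pole order $k^2-1$, fill in details the paper leaves implicit; your argument also covers $k=1$ without separate treatment.
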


\begin{proof}
The $k+1$ generators form a homogeneous regular sequence, each of
degree $k$. The Hilbert-series formula and multiplicity follow by
successively quotienting by those nonzerodivisors. Reducedness and
purity were proved above. For $k=1$ the quotient is the ground field
and the formula gives $H(t)=1$.
\end{proof}

For clarity, define the \emph{product count} of a nonzero homogeneous
polynomial $f$ of degree at least two as the least $s$ for which
\[
 f=\sum_{i=1}^s A_iB_i,
\]
where $A_i,B_i$ are homogeneous of positive degrees adding to
$\deg f$. This is the convention for strength that counts products;
the convention with $s+1$ summands shifts the value by one.

\begin{corollary}\label{cor:strength}
Over every field of characteristic different from two, the product
count of $\per_n$ is exactly $n$ for $n\geq2$.
\end{corollary}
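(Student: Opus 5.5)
Two inequalities are needed: an explicit decomposition of $\per_n$ into $n$ products, and a lower bound showing that fewer products are impossible. The lower bound will come from the codimension count of Corollary~\ref{cor:critical-intro}.

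For the upper bound, expand along the first row:
\[
 \per_n=\sum_{j=1}^n x_{1j}\,\per X[\{2,\ldots,n\},\{1,\ldots,n\}\setminus\{j\}].
\]
Each summand is a product of a linear form and a homogeneous form of degree $n-1\geq1$. So the product count is at most $n$ whenever $n\geq2$.

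For the lower bound, suppose $\per_n=\sum_{i=1}^s A_iB_i$ with $s<n$ and all $A_i,B_i$ homogeneous of positive degree. Pass to an algebraic closure; the decomposition persists and the critical locus does not change. Let $Z=V(A_1,\ldots,A_s,B_1,\ldots,B_s)\subset\A^{n^2}$. This is cut out by $2s$ homogeneous equations, so it is nonempty (it contains the origin) and, by Krull's principal ideal theorem, every component has codimension at most $2s$. By the Leibniz rule,
\[
 \partial\per_n=\sum_i\bigl(B_i\,\partial A_i+A_i\,\partial B_i\bigr),
\]
and every term vanishes on $Z$. Hence $Z\subseteq\Crit(\per_n)$, which gives
\[
 \codim\Crit(\per_n)\leq\codim Z\leq2s<2n.
\]
This contradicts Corollary~\ref{cor:critical-intro}, which says $\codim\Crit(\per_n)=2n$. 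Therefore $s\geq n$.

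The hypothesis $\operatorname{char}\kk\neq2$ enters only through Corollary~\ref{cor:critical-intro}. No division by $n$ or by degrees occurs, since the derivative argument uses only the Leibniz rule. The decisive step is the lower bound, and it reduces entirely to the exact codimension $2n$ of the critical locus already established; the remaining work is the routine check that $Z$ is nonempty and has codimension at most $2s$.
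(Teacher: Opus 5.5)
Your proposal is correct and follows essentially the same route as the paper. You obtain the upper bound by row expansion and the lower bound from the inclusion $V(A_1,B_1,\ldots,A_s,B_s)\subseteq\Crit(\per_n)$, the height bound $2s$, Corollary~\ref{cor:critical-intro}, and a pass to an algebraic closure.
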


\begin{proof}
Any such expression implies
$V(A_1,B_1,\ldots,A_s,B_s)\subseteq\Crit(f)$ by differentiation.
The locus on the left is nonempty, since all factors have positive
degree, and its codimension is at most $2s$ by the height theorem.
For $f=\per_n$, Corollary~\ref{cor:critical-intro} therefore gives
$2n\leq2s$. Expansion along one row expresses $\per_n$ as $n$
products of the required kind. The lower bound can be checked after
extension to an algebraic closure and so holds over the original
field as well.
\end{proof}

\begin{remark}[Characteristic two]
The assumption on the characteristic is essential. In characteristic
two, permanents equal determinants. For example, the $2\times2$
permanents of a $3\times3$ matrix cut out the matrices of rank at
most one, a variety of dimension five rather than three.
In \eqref{eq:cubic-identity}, the coefficient two vanishes, so the
additional border equations cannot be deduced. For determinants in
other characteristics, the two ordered contributions in that
calculation have opposite signs and cancel.
\end{remark}

Theorem~\ref{thm:dimension} determines dimension, but does not
classify components or assert equidimensionality for every
$I_h(a,b)$. The reducedness theorem is restricted to
$I_k(k,k+1)$. Neither theorem implies Frobenius purity of all these
rings. The results also do not provide a superpolynomial circuit
lower bound: the product-count consequence above is linear in $n$.

\appendix
\section{Exact finite checks}\label{app:checks}

The accompanying program \texttt{verify.py} verifies identities by
expanding permanents as sums over permutations in a sparse polynomial
ring over $\mathbb Z$. It checks the bordered expansion,
\eqref{eq:cubic-identity} and its transpose, and the two types of
highest parts used in the proofs, for sizes two through five.
It also checks every coordinate support in the monomial model
\eqref{eq:support} for $2\leq k\leq8$.
The latter enumeration comprises $87{,}376$ pairs of supports.

The calculations use exact integer coefficients and no probabilistic
substitutions. They check the finite identities and support cases
specified by the program. They do not establish dimension, radicality,
or the assertions for arbitrary size by finite inference; those
claims depend on the proofs in the preceding sections. The program
uses only the Python standard library, and the supplement includes
its expected output and instructions for reproducing the checks.

\paragraph{Acknowledgment.}
The author acknowledges OpenAI GPT-6 for assistance with research
exploration, proof development, verification code, and manuscript
preparation.

\end{document}